\newcommand{\coker}{\operatorname{coker}}
\newcommand{\pd}{\operatorname{\mathrm{pd}}}
\newcommand{\inv}{^{-1}}
\newcommand{\ov}[1]{\ensuremath{\overline {#1}}}
\newcommand{\til}[1]{\ensuremath{\widetilde {#1}}}
\newcommand{\Tor}{\operatorname{\mathrm{Tor}}\nolimits}
\newtheorem*{ThmA*}{Theorem~A}
\newtheorem{Thm}{Theorem}[section]
\newtheorem{Prop}[Thm]{Proposition}
\theoremstyle{definition}
\theoremstyle{remark}
\newtheorem{Cor}[Thm]{Corollary}
{\theoremstyle{remark}
}
{\theoremstyle{remark}
}
\theoremstyle{remark}
\theoremstyle{remark}
\theoremstyle{remark}
\theoremstyle{remark}
\newtheorem*{Claim*}{Claim}}
\numberwithin{equation}{section}
\title[Exact sequence for the relation bimodule]{A Pride-Guba-Sapir exact sequence for the relation bimodule of  an associative algebra}
\dedicatory{This paper is dedicated to the memories of Victor Guba, Steve Pride and Mark Sapir}
\author{Benjamin Steinberg}
\address[B.~Steinberg]{%
    Department of Mathematics\\
    City College of New York\\
    Convent Avenue at 138th Street\\
    New York, New York 10031\\
    USA}
\email{bsteinberg@ccny.cuny.edu}
\thanks{The author was supported by a Simons Foundation Collaboration Grant, award number 849561, and the Australian Research Council Grant DP230103184.}
\date{July 16, 2024}
\keywords{relation bimodule, Squier complex, homological finiteness}
\subjclass[2020]{20M50, 20M05, 20F65}
\begin{document}

\begin{abstract}
Given a presentation of a monoid $M$, combined work of Pride and of Guba and Sapir provides an exact sequence connecting the relation bimodule of the presentation (in the sense of Ivanov) with the first homology of the Squier complex of the presentation, which is naturally a $\mathbb ZM$-bimodule. This exact sequence was used by Kobayashi and Otto to prove the equivalence of Pride's finite homological type (FHT) property with the homological finiteness condition bi-$\mathrm{FP}_3$.  Guba and Sapir used this exact sequence to describe the abelianization of a diagram group.   We prove here a generalization of this exact sequence of bimodules for presentations of associative algebras. Our proof is more elementary than the original proof for the special case of monoids.
\end{abstract}

\maketitle
\section{Introduction}
Let $K$ be a commutative ring.  The relation bimodule arose out of the work of Bergman and Dicks on universal derivations~\cite{BergmanDickDeriv}, although the name seems to have been coined by Ivanov~\cite{IvanovRelation}.  Let $\mathcal P=\langle X\mid R\rangle$ be a presentation of a monoid $M$.  The Squier complex $D(\mathcal P)$~\cite{PrideLow,GSDiagram} is a certain $2$-complex associated to the presentation with the property that homotopy classes of paths in the $2$-complex correspond to isotopy classes of reduced monoid pictures (i.e., duals of van Kampen diagrams) for the presentation.  The fundamental groups of the Squier complex are called diagram groups~\cite{GSDiagram} and have been intensively studied in geometric group theory because they can be viewed as generalizations of Thompson's group $F$.  Pride observed that $H_1(D(\mathcal P))$ has a natural $\mathbb ZM$-bimodule structure and denotes it $\pi_2^{(b)}(\mathcal P)$.  In the case of a group presentation, this bimodule corresponds in a natural way to $\pi_2$ of an Eilenberg-Mac Lane space for the group, whence the notation.  Pride proved~\cite[Theorem~4.2]{PrideLow} that there is an exact sequence $\pi_2^{(b)}(\mathcal P)\to \mathbb ZM\otimes_{\mathbb Z} \mathbb ZR\otimes_{\mathbb Z}\mathbb ZM\to \mathcal M^{(b)}\to 0$ where $\mathcal M^{(b)}$ is the relation bimodule of the presentation,  and he asked whether the first map is injective.  Guba and Sapir~\cite[Theorem~11.3]{GSDiagram} showed that $0\to \pi_2^{(b)}(\mathcal P)\to \mathbb ZM\otimes_{\mathbb Z} \mathbb ZR\otimes_{\mathbb Z}\mathbb ZM\to \mathcal M^{(b)}\to 0$ is indeed exact, and used this to give a description of the abelianization of a diagram group.

 The proof of Guba and Sapir is somewhat indirect.  It suffices to show that the composition of the Hurewicz map on each diagram group with $H_1(D(\mathcal P))\to \mathbb ZM\otimes_{\mathbb Z} \mathbb ZR\otimes_{\mathbb Z}\mathbb ZM$  has kernel the commutator subgroup.  Guba and Sapir first assume that $R$ is a complete rewriting system and use their presentation for diagram groups of complete rewriting systems. Then they reduce the general case to this case using the Knuth-Bendix completion process and the behavior of diagram groups under Knuth-Bendix completion.  Kobayashi and Otto~\cite{KobOttoExacbi} later gave a more homological proof of the exactness of this sequence, but they still used the Knuth-Bendix completion process.

 Here I consider the more general situation of an associative $K$-algebra $A$ given by a presentation $\mathcal P=\langle X\mid R\rangle$.  I require $A$ to be projective as a $K$-module, as is the case for a monoid ring $KM$.  I define an $A$-bimodule $\pi_2(\mathcal P)$ in analogy to the construction of $\pi_2^{(b)}(\mathcal P)$.  The main result gives an exact sequence $0\to \pi_2(\mathcal P)\to A\otimes_K KR\otimes_K A\to \mathcal M\to 0$ where $\mathcal M$ is the relation bimodule of the presentation.  The proof avoids using Knuth-Bendix completion or Gr\"obner bases.  Instead, I use a long exact $\Tor$-sequence and an explicit computation of the connecting map.  Splicing this with the exact sequence $0\to\mathcal M\to A\otimes_K KX\otimes_K A\to A\otimes_K A\to A\to 0$ of Bergman and Dicks~\cite{BergmanDickDeriv}, then yields the exact sequence $0\to \pi_2(\mathcal P)\to A\otimes_K KR\otimes_K A\to A\otimes_K KX\otimes_K A\to A\otimes_K A\to A\to 0$.

 In the final section, I recover the original exact sequence of Pride-Guba-Sapir by turning the monoid presentation into an algebra presentation and checking that the two notions of $\pi_2$ give the same bimodule.  I also interpret $\pi_2^{(b)}(\mathcal P)$ as the second homology of the two-sided Cayley complex of the monoid presentation $\mathcal P$ introduced in~\cite{TopFinite1}.

\section{Relation bimodules}
In what follows $K$ is a commutative ring and $A$ is an associative $K$-algebra.  Usually, $A$ will be projective as a $K$-module.  By an $A$-bimodule, we mean a left $A\otimes_K A^{op}$-module.  We say that $A$ is \emph{bi-$\mathrm{FP}_n$} over $K$ if $A$ has a projective resolution as an $A$-bimodule that is finitely generated through degree $n$. We record here some elementary facts.

\begin{Prop}\label{p:tensor.J}
Let $A$ be a $K$-algebra and $I$ an ideal.
\begin{enumerate}
\item If $B$ is a left $A$-module, $A/I\otimes_A B\cong B/IB$.
\item If $B$ is a right $A$-module, $B\otimes_A A/I\cong B/BI$.
\item If $B$ is an $A$-bimodule, then $A/I\otimes_A B\otimes_A A/I\cong B/(IB+BI)$.
\item If $B$ is a flat right (left) $A$-module, then $B\otimes_A I\cong BI$ ($I\otimes_A B\cong IB$).
\end{enumerate}
\end{Prop}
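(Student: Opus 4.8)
The plan is to derive all four statements from the short exact sequence of $A$-bimodules
\[
0 \to I \to A \to A/I \to 0,
\]
read as a sequence of left or right $A$-modules as the situation demands, together with the right exactness of the tensor product and the canonical isomorphisms $A\otimes_A B\cong B$ and $B\otimes_A A\cong B$.

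For (1) I would regard the displayed sequence as one of right $A$-modules (legitimate since $I$ is two-sided, so $A/I$ is a right $A$-module) and apply the right-exact functor $-\otimes_A B$, obtaining the exact sequence $I\otimes_A B\to A\otimes_A B\to A/I\otimes_A B\to 0$. Identifying $A\otimes_A B$ with $B$ via $a\otimes b\mapsto ab$, the composite $I\otimes_A B\to A\otimes_A B\cong B$ has image exactly $IB$, the subspace spanned by the products $ib$ with $i\in I$, $b\in B$. Hence the cokernel yields $A/I\otimes_A B\cong B/IB$. Statement (2) is proved in the same way after exchanging left and right and applying $B\otimes_A -$.

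For (3) I would compose (1) and (2). Applying (2) on the right gives $B\otimes_A A/I\cong B/BI$, which remains a left $A$-module; feeding this into (1) gives $A/I\otimes_A(B/BI)\cong (B/BI)\big/\bigl(I\cdot(B/BI)\bigr)$. Since $I\cdot(B/BI)=(IB+BI)/BI$, the third isomorphism theorem identifies the right-hand side with $B/(IB+BI)$, as claimed. The one point worth checking is that the left $A$-action on $B/BI$ appearing in the second step is the action inherited from $B$, which is immediate from tracing the isomorphism of (2) through $b\otimes\bar c\mapsto\overline{bc}$.

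Statement (4) is the only place that goes beyond right exactness, and is where flatness is genuinely used. I would tensor the displayed sequence with the flat right $A$-module $B$: flatness ensures that $B\otimes_A I\to B\otimes_A A$ stays injective, and composing with $B\otimes_A A\cong B$ exhibits an isomorphism onto its image $BI$, so $B\otimes_A I\cong BI$; the left-module case is symmetric. I do not expect a genuine obstacle in any part — these are bookkeeping arguments — so the main thing to be careful about is simply keeping track of which side each action lives on and confirming that the relevant tensor-product images are exactly the products $IB$ and $BI$.
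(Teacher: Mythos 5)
Your proposal is correct and follows essentially the same route as the paper: the paper dismisses (1) and (2) as standard (you supply the standard right-exactness argument), derives (3) by composing (1) and (2) with the third isomorphism theorem exactly as you do, and obtains (4) by tensoring $0\to I\to A\to A/I\to 0$ with the flat module $B$. No gaps.
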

\begin{proof}
The first two items are standard.  The third follows from these as $(B/BI)/I(B/BI)=(B/BI)/((IB+BI)/BI)\cong B/(IB+BI)$. The final follows from tensoring $B$ with the exact sequence $0\to I\to A\to A/I\to 0$ and using the first/second items.
\end{proof}

 The free monoid on a set $X$ is denoted by $X^*$, and $KX^*$ is then the free $K$-algebra generated by $X$.  Following Ivanov~\cite{IvanovRelation}, if $A$ is $X$-generated and $J=\ker (KX^*\to A)$, then $J/J^2$ is an $A$-bimodule,  termed the \emph{relation bimodule}.  Note that if $J=(R)$, then $J/J^2$ is generated by the $r+J^2$ with $r\in R$ as an $A$-bimodule.  The relation bimodule was first studied by Bergman and Dicks~\cite{BergmanDickDeriv}, but not under that name.

Let $I=\ker (KX^*\otimes_K KX^*\xrightarrow{\mu} KX^*)$ be the kernel of the multiplication map.  It is well known that $I$ is a free $KX^*$-bimodule with basis the elements $x\otimes 1-1\otimes x$ with $x\in X$. We give a quick sketch of the proof for completeness.

\begin{Prop}\label{p:derivation.free}
Let $I=\ker (KX^*\otimes_K KX^*\xrightarrow{\mu} KX^*)$.  Then $I$ is a free $KX^*$-bimodule with basis the elements $x\otimes 1-1\otimes x$ with $x\in X$.
\end{Prop}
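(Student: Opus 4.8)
The plan is to exhibit the obvious candidate map and verify that it is a bimodule isomorphism. Write $e_x=x\otimes 1-1\otimes x$, let $F=\bigoplus_{x\in X}(KX^*\otimes_K KX^*)$ be the free $KX^*$-bimodule on a basis $\{f_x\}_{x\in X}$, and let $\phi\colon F\to I$ be the unique bimodule map with $\phi(f_x)=e_x$; each $e_x$ indeed lies in $I$ since $\mu(e_x)=x-x=0$. The task then reduces to showing $\phi$ is bijective, and I would handle surjectivity and injectivity by two quite different elementary arguments.

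For surjectivity the key tool is the $K$-derivation $d\colon KX^*\to KX^*\otimes_K KX^*$ defined by $d(w)=w\otimes 1-1\otimes w$, which satisfies the Leibniz rule and takes its values in $I$. First I would record the telescoping identity $d(x_1\cdots x_m)=\sum_{i=1}^{m}x_1\cdots x_{i-1}\,e_{x_i}\,x_{i+1}\cdots x_m$, which shows $d(u)\in\image\phi$ for every word $u\in X^*$. Next, for monomials $u,v$ one checks directly that $u\otimes v-1\otimes uv=d(u)\cdot v\in\image\phi$, and summing $K$-linearly gives $a-1\otimes\mu(a)\in\image\phi$ for \emph{every} $a\in KX^*\otimes_K KX^*$. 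In particular, if $a\in I$ then $\mu(a)=0$, whence $a\in\image\phi$; thus $I\subseteq\image\phi$ and surjectivity follows.

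For injectivity I would exploit the left-module structure: $KX^*\otimes_K KX^*$ is free as a left $KX^*$-module with basis $\{1\otimes v:v\in X^*\}$, and correspondingly $F$ is free as a left module with basis $\{f_x\cdot v:x\in X,\,v\in X^*\}$. Since $\phi(f_x\cdot v)=x\cdot(1\otimes v)-(1\otimes xv)$, a relation $\sum_{x,v}\lambda_{x,v}\,\phi(f_x\cdot v)=0$ with $\lambda_{x,v}\in KX^*$ becomes, upon comparing the coefficient of each left-basis vector $1\otimes w$, the system $\sum_{x\in X}\lambda_{x,w}\,x=\lambda_{y,w'}$ whenever $w=yw'$ with $y\in X$, together with $\sum_{x\in X}\lambda_{x,1}\,x=0$ for the empty word $1$. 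I would then induct on $|v|$. The base case $v=1$ forces every $\lambda_{x,1}=0$, because the monomials occurring in $\sum_{x}\lambda_{x,1}x$ have distinct final letters for distinct $x$, so their supports are disjoint; the inductive step uses the induction hypothesis to kill the right-hand side $\lambda_{y,w'}$ and then repeats the same disjoint-support argument. Hence all $\lambda_{x,v}=0$ and $\phi$ is injective.

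The main obstacle is the injectivity step: surjectivity is a routine telescoping computation, whereas ruling out nontrivial bimodule relations among the $e_x$ is exactly where the freeness of $KX^*$ must be used in an essential way. The cleanest route I see is the passage to the left-module basis $\{1\otimes v\}$, which converts the bimodule relation into a triangular linear system graded by word length and solvable by induction. The more conceptual alternative—identifying $I$ with the universal derivation module and invoking the natural isomorphism $\Der_K(KX^*,M)\cong M^X$ for the free algebra—proves the same statement, but it requires first establishing that universal property and so is less self-contained than the direct coefficient argument.
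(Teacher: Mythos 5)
Your proposal is correct, and the generation/surjectivity half is essentially the paper's argument: both reduce to the observation that $I$ has $K$-basis $\{u\otimes v-1\otimes uv : u\neq 1\}$ (equivalently, that $a-1\otimes\mu(a)$ lies in the sub-bimodule generated by the $e_x$ for every $a$) and then telescope $w\otimes 1-1\otimes w$ down to the generators $e_x$. Where you genuinely diverge is the freeness half. The paper never writes down a linear-independence argument; instead it verifies the universal property directly, constructing for an arbitrary bimodule $B$ and map $f\colon X\to B$ a derivation $d\colon KX^*\to B$ via the homomorphism into the upper-triangular matrix algebra $\left\{\begin{bmatrix} a & b\\ 0 & a\end{bmatrix}\right\}$, and then checking that $a\otimes a'\mapsto d(a)a'$ restricts to a bimodule map $I\to B$ sending $e_x\mapsto f(x)$. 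You instead prove injectivity of the canonical surjection $F\to I$ by passing to the left-$KX^*$-module basis $\{1\otimes v\}$ of $KX^*\otimes_K KX^*$, extracting the triangular system $\sum_x\lambda_{x,w}x=\lambda_{y,w'}$ (for $w=yw'$) and $\sum_x\lambda_{x,1}x=0$, and inducting on $|w|$ using the disjointness of supports of the $\lambda_{x,w}x$ for distinct $x$; all the individual steps here check out ($\phi(f_x\cdot v)=x\otimes v-1\otimes xv$, uniqueness of the decomposition $w=yw'$, and injectivity of right multiplication by $x$ on $KX^*$). Your route is more hands-on and stays entirely inside the free algebra, at the cost of a bookkeeping induction; the paper's route is slicker and, as a bonus, produces the universal derivation explicitly, which is exactly the map $d_2$ reused in Theorem~\ref{t:Bergman.dicks}. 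Both are complete proofs.
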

\begin{proof}
First we show that $I$ is generated by these elements.  There is an isomorphism $K[X^*\times X^*]\to KX^*\otimes_K KX^*$ given by $(u,v)\mapsto u\otimes v$ for $u,v\in X^*$.  Composing this with $\mu$ is the $K$-linear extension of the multiplication map $X^*\times X^*\to X^*$, which  clearly has kernel with $K$-basis all differences $(u,v)-(1,uv)$ with $u,v\in X^*$ and $u\neq 1$.   It follows that $I$ has $K$-basis all elements of the form $u\otimes v-1\otimes uv$ with $u\neq 1$.  But $u\otimes v-1\otimes uv=(u\otimes 1-1\otimes u)v$, and so $I$ is generated by all elements of the form $w\otimes 1-1\otimes w$ with $1\neq w\in X^*$ as a $KX^*$-bimodule.  We show by induction on $|w|$ that $w\otimes 1-1\otimes w$ is in the sub-bimodule generated by the $x\otimes 1-1\otimes x$ with $x\in X$ with the case $|w|=1$ trivial.  If $w=xu$ with $x\in X$, then $w\otimes 1-1\otimes w=xu\otimes 1-1\otimes xu = x(u\otimes 1-1\otimes u)+(x\otimes 1-1\otimes x)u$ and the claim follows by induction.

Suppose now that $B$ is a $KX^*$-bimodule and $f\colon X\to B$ is any map.  Then \[A=\left\{\begin{bmatrix} a & b\\ 0 & a\end{bmatrix} \mid a\in KX^*, b\in B\right\}\] is a $K$-algebra, and we can define a homomorphism $\psi\colon KX^*\to A$ such that \[\psi(x) = \begin{bmatrix} x & f(x)\\ 0 &x\end{bmatrix}, \quad \psi(a) = \begin{bmatrix} a & d(a)\\ 0 &a\end{bmatrix}\] for $x\in X$ and $a\in KX^*$.  One easily verifies that $d$ is a derivation, meaning that $d(aa') = ad(a')+d(a)a'$, and that $d(x)=f(x)$. Since $(a,a')\mapsto d(a)a'$ is clearly $K$-bilinear, we can define a $K$-linear map $KX^*\otimes_K KX^*\to B$ by $\alpha(a\otimes a') = d(a)a'$.  We claim that $\alpha\colon I\to B$ is a $KX^*$-bimodule homomorphism with $\alpha(x\otimes 1-1\otimes x)=f(x)$ for $x\in X$.  This will complete the proof.  Indeed, if $\sum u_i\otimes v_i\in I$, then $\sum u_iv_i=0$.  Trivially, $\alpha(\sum u_i\otimes v_iw) = \sum d(u_i)v_iw=d(\sum u_i\otimes v_i)w$.  On the other hand, $\alpha(\sum wu_i\otimes v_i) = \sum d(wu_i)v_i = w\sum d(u_i)v_i+ d(w)\sum u_iv_i = wd(\sum u_i\otimes v_i)$.  Finally, $\alpha(x\otimes 1-1\otimes x) = d(x)-d(1)x= f(x)$, as $d(1)=0$.
\end{proof}

 The following result is due to Bergman and Dicks~\cite{BergmanDickDeriv}.  We give a proof for completeness.

\begin{Thm}\label{t:Bergman.dicks}
Let $A$ be a $K$-algebra generated by $X$ that is projective over $K$.   Then there is an exact sequence
\[0\to J/J^2\xrightarrow{d_2} A\otimes_K KX\otimes_K A\xrightarrow{d_1} A\otimes_A A\xrightarrow{\mu} A\to 0\]
where $\mu$ is the multiplication map, $d_1(1\otimes x\otimes 1) = x\otimes 1-1\otimes x$ and $d_2$ is induced by the derivation $KA^*\to  A\otimes_K KX\otimes_K A$ given by $x_1\cdots x_m\mapsto  \sum_{i=1}^m (x_1\cdots x_{i-1}+J)\otimes x_i\otimes (x_{i+1}\cdots x_m+J)$.
\end{Thm}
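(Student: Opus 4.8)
The plan is to reduce everything to the free algebra $F=KX^*$, where Proposition~\ref{p:derivation.free} makes the situation completely explicit, and then to descend to $A=F/J$ by tensoring. Write $\pi\colon F\to A$ for the quotient map, $J=\ker\pi$, and let $\mu_F\colon F\otimes_K F\to F$ and $\mu\colon A\otimes_K A\to A$ be the multiplications. By Proposition~\ref{p:derivation.free} the map $d_1^F\colon P:=F\otimes_K KX\otimes_K F\to F\otimes_K F$, $1\otimes x\otimes 1\mapsto x\otimes 1-1\otimes x$, is injective with image $I=\ker\mu_F$, so $0\to P\xrightarrow{d_1^F}F\otimes_K F\xrightarrow{\mu_F}F\to 0$ is a free $F$-bimodule resolution of $F$. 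First I would obtain the right-hand part of the sequence for free: applying the right-exact functor $A\otimes_F(-)\otimes_F A$ to this resolution and using $A\otimes_F F\cong A\cong F\otimes_F A$ together with Proposition~\ref{p:tensor.J} to identify $A\otimes_F(F\otimes_K F)\otimes_F A\cong A\otimes_K A$, $A\otimes_F F\otimes_F A\cong A$, and $A\otimes_F P\otimes_F A\cong A\otimes_K KX\otimes_K A$, yields exactness of $A\otimes_K KX\otimes_K A\xrightarrow{d_1}A\otimes_K A\xrightarrow{\mu}A\to 0$, with the induced map being the stated $d_1$. Hence $\mu$ is onto and $\image d_1=\ker\mu$, and the whole problem is exactness at $A\otimes_K KX\otimes_K A$: I must show $\ker d_1=\image d_2$ and that $d_2$ is injective.

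Second I would make $\ker d_1$ concrete. Since $A$ is $K$-projective, $\pi$ splits $K$-linearly and tensoring over $K$ is exact, so the surjections $\rho=\pi\otimes\pi\colon F\otimes_K F\to A\otimes_K A$ and $q=\pi\otimes 1\otimes\pi\colon P\to A\otimes_K KX\otimes_K A$ have kernels $W:=J\otimes_K F+F\otimes_K J$ and $W':=J\otimes_K KX\otimes_K F+F\otimes_K KX\otimes_K J$. A short diagram chase through the relation $d_1 q=\rho d_1^F$, using that $d_1^F$ is an isomorphism onto $I$ and that $d_1^F(W')\subseteq W\cap I$, then identifies
\[\ker d_1\cong (W\cap I)/d_1^F(W'),\]
and transports $d_2$ to the map $\phi\colon J/J^2\to (W\cap I)/d_1^F(W')$, $\overline{j}\mapsto (j\otimes 1-1\otimes j)+d_1^F(W')$. (Here $j\otimes 1-1\otimes j=d_1^F(D_F(j))$ by the telescoping identity $d_1^F(D_F(w))=w\otimes 1-1\otimes w$ for the free derivation $D_F\colon F\to P$ lifting the derivation in the statement, and $d_1 d_2=0$ because $\pi(j)=0$.) It therefore suffices to prove $\phi$ is an isomorphism.

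Third, surjectivity of $\phi$. Writing $V=d_1^F(W')$, its generators yield the congruences $jx\otimes v\equiv j\otimes xv$ and $ux\otimes j'\equiv u\otimes xj'\pmod V$ for $j,j'\in J$; iterating these pushes all letters of one tensor factor onto the other, so any $w\in W\cap I$ satisfies $w\equiv p\otimes 1+1\otimes p'\pmod V$ with $p,p'\in J$. Applying $\mu_F$, which kills $V\subseteq I$, forces $p+p'=0$, whence $w\equiv p\otimes 1-1\otimes p=\phi(\overline{p})$.

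Finally, injectivity of $\phi$ is the main obstacle, and it is here that $K$-projectivity of $A$ is genuinely used. Fixing a $K$-linear splitting $\sigma\colon A\to F$ of $\pi$, so that $F=\sigma(A)\oplus J$ and $W=(J\otimes_K J)\oplus(J\otimes_K\sigma(A))\oplus(\sigma(A)\otimes_K J)$, I would define $\Psi\colon W\to J/J^2$ by $j\otimes\sigma(a)\mapsto \overline{j}\cdot a$ on the middle summand and $0$ on the other two. A direct check on the generators of $V$ (splitting the free factors through $\sigma$ and using $\overline{jx}=\overline{j}\,\pi(x)$) shows $\Psi(V)=0$, while $\Psi(j\otimes 1-1\otimes j)=\overline{j}$; thus $\Psi$ descends to a left inverse of $\phi$. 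Combined with the surjectivity above, this proves $\phi$, hence $d_2$, is an isomorphism onto $\ker d_1$, and splicing with $0\to\ker\mu\to A\otimes_K A\to A\to 0$ completes the sequence. The only delicate point is this injectivity, i.e. that $j\otimes 1-1\otimes j\in d_1^F(W')$ forces $j\in J^2$; everything else is formal once Proposition~\ref{p:derivation.free} supplies the free resolution over $F$.
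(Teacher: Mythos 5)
Your proof is correct, and after the shared starting point --- the free $KX^*$-bimodule resolution $0\to P\to KX^*\otimes_K KX^*\to KX^*\to 0$ supplied by Proposition~\ref{p:derivation.free}, together with right-exactness of $A\otimes_{KX^*}(-)\otimes_{KX^*}A$ to settle exactness at $A\otimes_K A$ and at $A$ --- it takes a genuinely different route at the main step. The paper tensors the resolution on the right with $A$ (using the right-module splitting) to get a length-one projective resolution of $A$ as a left $KX^*$-module, then tensors on the left with $A$ and invokes $\Tor_1^{KX^*}(A,A)\cong J/J^2$, leaving the identification of the connecting map with $d_2$ as an exercise; $K$-projectivity enters only to make $KX^*\otimes_K KX\otimes_K A$ and $KX^*\otimes_K A$ projective left $KX^*$-modules. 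You instead compute $\ker d_1$ by hand as $(W\cap I)/d_1^F(W')$ and exhibit an isomorphism with $J/J^2$ in both directions: a letter-pushing argument modulo $d_1^F(W')$ for surjectivity, and for injectivity an explicit retraction $\Psi$ built from a $K$-linear splitting $F=\sigma(A)\oplus J$, which is in effect a hands-on witness for the Cartan--Eilenberg isomorphism $\Tor_1^{KX^*}(A,A)\cong J/J^2$. Your version is longer but avoids $\Tor$ entirely and makes visible exactly where $K$-projectivity is used (the splitting $\sigma$ and the resulting direct-sum decomposition of $W$). Two small points you should write out: well-definedness of $\phi$ on $J/J^2$, which follows from the Leibniz rule since $D_F(j_1j_2)=j_1D_F(j_2)+D_F(j_1)j_2\in W'$ for $j_1,j_2\in J$, so that $j_1j_2\otimes 1-1\otimes j_1j_2\in d_1^F(W')$; and, in evaluating $\Psi(j\otimes 1-1\otimes j)$, the decomposition $1=\sigma(1_A)+(1-\sigma(1_A))$ with $1-\sigma(1_A)\in J$, which still returns $\overline{j}$ without having to assume $\sigma(1_A)=1$.
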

\begin{proof}
Since $I$ is freely generated as a $KX^*$-bimodule by the elements of the form $x\otimes 1-1\otimes x$ with $x\in X$, we have an exact sequence
\begin{equation}\label{eq:sequence}
0\to KX^*\otimes_K KX\otimes_K KX^*\to KX^*\otimes_K KX^*\to KX^*\to 0
\end{equation}
 where the first map takes $1\otimes x\otimes 1$ to $x\otimes 1-1\otimes x$ and the second map is the multiplication map.  Since $KX^*$ is free as a right $KX^*$-module, this sequence splits over $(KX^*)^{op}$, and so tensoring on the right with $A$ yields the exact sequence $0\to KX^*\otimes_K KX\otimes_K A\to  KX^*\otimes_K A\to A\to 0$.  Since $A$ is projective over $K$, it follows that $KX^*\otimes_K KX\otimes_K A$ and $KX^*\otimes_K A$ are projective left $KX^*$-modules.  Thus, tensoring on the left with $A=KX^*/J$ and recalling that $\Tor_1^{KX^*}(KX^*/J,KX^*/J)\cong J/J^2$ (cf.~\cite[Exercise~19, p.~126]{CartanEilenberg}), yields the exact sequence \[0\to J/J^2\to A\otimes_K KX\otimes_K A\to A\otimes_K A\to A\to 0.\]  It is a straightforward exercise to check that the connecting map is given by $d_2$ above (or see~\cite{BergmanDickDeriv}).
\end{proof}

In particular, if $X$ is finite, then $A$ is bi-$\mathrm{FP}_2$ over $K$ if and only if the relation bimodule is finitely generated.
Next we shall need a result I learned of from Webb~\cite{WebbResolution}.

\begin{Prop}\label{p.proj.dim}
Let $M$ be a left/right $KX^*$-module that is projective over $K$.  Then $\pd M\leq 1$.
\end{Prop}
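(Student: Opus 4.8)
The plan is to construct an explicit length-one projective resolution of $M$ by tensoring the Bergman--Dicks bimodule sequence \eqref{eq:sequence} with $M$ over $KX^*$. I treat the case of a left module; the right-module case follows by the symmetric argument (or by passing to $(KX^*)^{op}\cong KX^*$, the isomorphism given by word reversal, under which ``projective over $K$'' is preserved).

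Recall the exact sequence of $KX^*$-bimodules
\[0\to KX^*\otimes_K KX\otimes_K KX^*\to KX^*\otimes_K KX^*\xrightarrow{\mu} KX^*\to 0\]
from \eqref{eq:sequence}. The key point, already exploited in the proof of Theorem~\ref{t:Bergman.dicks}, is that this is a split exact sequence of \emph{right} $KX^*$-modules: the map $w\mapsto 1\otimes w$ is a right $KX^*$-linear section of $\mu$, so the three terms, viewed only as right modules, form a split short exact sequence. I would then apply the additive functor $(-)\otimes_{KX^*}M$, which uses only the right $KX^*$-structure; because the sequence is right-split, it stays exact after tensoring. Using the standard identifications $(KX^*\otimes_K KX^*)\otimes_{KX^*}M\cong KX^*\otimes_K M$, $(KX^*\otimes_K KX\otimes_K KX^*)\otimes_{KX^*}M\cong KX^*\otimes_K KX\otimes_K M$, and $KX^*\otimes_{KX^*}M\cong M$, this yields the exact sequence of left $KX^*$-modules
\[0\to KX^*\otimes_K KX\otimes_K M\to KX^*\otimes_K M\to M\to 0,\]
with left structure coming from the leftmost tensor factor.

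It remains to observe that the two left-hand terms are projective left $KX^*$-modules. Since $M$ is projective over $K$, it is a direct summand of a free $K$-module, so $KX^*\otimes_K M$ is a summand of a free left $KX^*$-module and is therefore projective; and $KX\otimes_K M\cong M^{(X)}$ is again $K$-projective, whence $KX^*\otimes_K(KX\otimes_K M)$ is $KX^*$-projective by the same reasoning. Thus the displayed sequence is a projective resolution of $M$ of length one, giving $\pd M\leq 1$.

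The only genuine subtlety — and hence the main thing to get right — is the left-exactness, i.e.\ the injectivity of the first map. This is precisely where the right-module splitting of \eqref{eq:sequence} is indispensable, since $(-)\otimes_{KX^*}M$ is in general merely right exact; once that splitting is in hand, the argument is just the bookkeeping of the tensor identifications above together with the elementary fact that $KX^*\otimes_K(-)$ carries $K$-projectives to $KX^*$-projectives.
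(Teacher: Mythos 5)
Your proof is correct and follows essentially the same route as the paper: tensor the right-split sequence \eqref{eq:sequence} with $M$ over $KX^*$ and note that projectivity of $M$ over $K$ makes the two left-hand terms projective left $KX^*$-modules. The paper states this more tersely, but the content — including the reliance on the right-module splitting to preserve exactness — is identical.
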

\begin{proof}
We just handle the case $M$ is a left $KX^*$-module.   Tensoring the split exact sequence of free right $KX^*$-modules  \eqref{eq:sequence} on the right with $M$ yields an exact sequence $0\to KX^*\otimes_K KX\otimes_K M\to KX^*\otimes_K M\to M\to 0$ of left $KX^*$-modules.  Since $M$ is a projective $K$-module,  $KX^*\otimes_K KX\otimes_K M$ and $KX^*\otimes_K M$ are projective $KX^*$-modules.  Therefore, $\pd M\leq 1$.
\end{proof}

As a corollary, we can deduce that $J$ is projective as both a left and right $KX^*$-module.

\begin{Cor}\label{c:J.proj}
Let $J$ be an ideal of $KX^*$ and suppose that $KX^*/J$ is projective over $K$.  Then $J$ is projective as both a left and right $KX^*$-module.
\end{Cor}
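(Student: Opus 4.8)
The plan is to realise $J$ as a syzygy of $KX^*/J$ and to upgrade the projective dimension bound of Proposition~\ref{p.proj.dim} into projectivity of $J$ by a dimension-shifting argument. I treat the left-module case; the right-module case is identical by the symmetric half of Proposition~\ref{p.proj.dim}.

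First I would record the short exact sequence of left $KX^*$-modules
\[0\to J\to KX^*\to KX^*/J\to 0,\]
in which $KX^*$ is free, hence projective, over itself. Since $KX^*/J$ is projective as a $K$-module by hypothesis, Proposition~\ref{p.proj.dim} gives $\pd_{KX^*}(KX^*/J)\le 1$. The slogan is simply that a first syzygy of a module of projective dimension at most $1$ is projective, and $J$ is exactly such a syzygy.

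To make this precise I would apply $\Hom_{KX^*}(-,N)$ to the sequence above, for an arbitrary left $KX^*$-module $N$, and read off from the long exact $\Ext$-sequence the fragment
\[\Ext^1_{KX^*}(KX^*,N)\to \Ext^1_{KX^*}(J,N)\to \Ext^2_{KX^*}(KX^*/J,N)\to \Ext^2_{KX^*}(KX^*,N).\]
The two outer terms vanish because $KX^*$ is projective, and the term $\Ext^2_{KX^*}(KX^*/J,N)$ vanishes because $\pd_{KX^*}(KX^*/J)\le 1$. Hence $\Ext^1_{KX^*}(J,N)=0$ for every $N$, which is precisely the statement that $J$ is projective.

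I do not expect a genuine obstacle, as the argument is standard homological algebra once Proposition~\ref{p.proj.dim} is in hand; the only point requiring care is not to stop at the weaker conclusion $\pd_{KX^*}J\le 1$. Indeed, the hypothesis that $KX^*/J$ is $K$-projective splits the sequence over $K$, exhibiting $J$ as a $K$-summand of the free module $KX^*$ and hence as a projective $K$-module, so Proposition~\ref{p.proj.dim} applied directly to $J$ yields only $\pd_{KX^*}J\le 1$; the dimension shift above is what sharpens this to $\pd_{KX^*}J=0$. One could equally reach the conclusion via Schanuel's lemma, comparing the sequence with a length-one projective resolution of $KX^*/J$ to exhibit $J$ as a summand of a projective module.
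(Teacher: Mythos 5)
Your proof is correct and follows essentially the same route as the paper: both use the short exact sequence $0\to J\to KX^*\to KX^*/J\to 0$ together with the bound $\pd_{KX^*}(KX^*/J)\le 1$ from Proposition~\ref{p.proj.dim} to conclude that the syzygy $J$ is projective. The paper simply states ``it follows that $\pd J=0$'' where you spell out the dimension shift via the long exact $\Ext$-sequence; that is a fair and correct elaboration of the same argument.
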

\begin{proof}
We handle the case of left $KX^*$-modules.  We have an exact sequence $0\to J\to KX^*\to KX^*/J\to 0$ with $KX^*$ free.  Also $\pd KX^*/J\leq 1$ by Proposition~\ref{p.proj.dim}.  It follows that $\pd J=0$, that is, $J$ is projective as a left $KX^*$-module.
\end{proof}

The next proposition will be used to show that $J^m$ is projective as both a left and right module.

\begin{Prop}\label{p:bimodule.proj}
Let $B,C$ be bimodules over a $K$-algebra $A$ that are projective as both left and right $A$-modules.  Then $B\otimes_A C$ is projective as both a left and right $A$-module.
\end{Prop}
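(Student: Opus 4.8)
The plan is to prove that $B\otimes_A C$ is projective as a left $A$-module; the claim for right $A$-modules then follows by a symmetric argument. The guiding observation is that the left $A$-module structure on $B\otimes_A C$ is induced solely by the left action of $A$ on $B$, and so it is unaffected by the identification over $A$ in the tensor product. In particular, regarding $B$ as a right $A$-module, the assignment $N\mapsto B\otimes_A N$ is an additive functor from left $A$-modules to left $A$-modules that commutes with arbitrary direct sums and sends the regular module $A$ to $B\otimes_A A\cong B$.

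First I would use the projectivity of $C$ on the contracted side. Since $C$ is projective as a left $A$-module, there is a left $A$-module $C'$ and an index set $I$ with $C\oplus C'\cong \bigoplus_{i\in I}A$ as left $A$-modules. Applying the functor $B\otimes_A-$ then yields an isomorphism of left $A$-modules
\[(B\otimes_A C)\oplus (B\otimes_A C')\;\cong\; B\otimes_A\Bigl(\bigoplus_{i\in I}A\Bigr)\;\cong\;\bigoplus_{i\in I}B.\]
Since $B$ is projective as a left $A$-module, the right-hand side is projective as a left $A$-module, and hence so is its direct summand $B\otimes_A C$.

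For the right-module statement, I would run the same argument with the roles of $B$ and $C$ interchanged: using that $B$ is projective as a right $A$-module, write $B\oplus B'\cong\bigoplus_{j\in J}A$ as right $A$-modules, and apply the functor $-\otimes_A C$ to realize $B\otimes_A C$ as a direct summand of $\bigoplus_{j\in J}C$, which is projective as a right $A$-module because $C$ is.

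I do not anticipate a genuine obstacle here; the only point demanding care is the bookkeeping of the four module structures. Specifically, one must verify that the splitting $C\oplus C'\cong\bigoplus_{i\in I} A$ is an isomorphism of \emph{left} $A$-modules, so that the additive functor $B\otimes_A-$ applies directly, and one should track which projectivity hypothesis is consumed on which side: the left-module conclusion uses that both $C$ and $B$ are projective on the left, while the right-module conclusion uses the right projectivity of both $B$ and $C$.
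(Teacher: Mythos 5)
Your proof is correct and follows essentially the same route as the paper: split $C$ off a free left $A$-module, apply $B\otimes_A-$ to exhibit $B\otimes_A C$ as a direct summand of a direct sum of copies of $B$, and invoke the left projectivity of $B$; the right-module case is the dual argument. The care you take in tracking which projectivity hypothesis is used on which side matches the paper's (more terse) treatment.
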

\begin{proof}
Suppose that $C\oplus C'=F$ with $F$ a free left $A$-module.  Then $(B\otimes_A C)\oplus (B\otimes_A C')\cong  B\otimes_A F\cong \bigoplus B$.  Thus $B\otimes_A C$ is projective as a left module (since $B$ is).  The other claim is dual.
\end{proof}

\begin{Cor}\label{c:prod.ideal}
Let $I$ be an ideal of $K$-algebra $A$ that is projective as both a left and right $A$-module.  If $B$ is an $A$-bimodule that is projective as both a left and right $A$-module, then $IB,BI$ are projective as both left and right $A$-modules.  In particular, $I^m$ is projective as both a left and right $A$-module for all $m\geq 1$.
\end{Cor}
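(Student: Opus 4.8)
The plan is to realize $IB$ and $BI$ as tensor products over $A$ and then feed them into Proposition~\ref{p:bimodule.proj}. First I would observe that since $B$ is projective as a left $A$-module it is in particular flat as a left $A$-module, so Proposition~\ref{p:tensor.J}(4) supplies an isomorphism $I\otimes_A B\cong IB$; symmetrically, flatness of $B$ as a right $A$-module gives $B\otimes_A I\cong BI$. The point I would be careful about is that these must be read as isomorphisms of $A$-bimodules, not merely of one-sided modules, since Proposition~\ref{p:bimodule.proj} takes bimodule inputs. This is routine: the multiplication map $I\otimes_A B\to B$, $a\otimes b\mapsto ab$, is an $A$-bimodule homomorphism when $I\otimes_A B$ carries the outer left action inherited from $I$ and the outer right action inherited from $B$ (one checks $a'\cdot(a\otimes b)\mapsto a'(ab)$ and $(a\otimes b)\cdot a'\mapsto (ab)a'$), its image is exactly $IB$, and item (4) of Proposition~\ref{p:tensor.J} identifies its kernel as trivial. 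The same remark applies to $B\otimes_A I\cong BI$.

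With these identifications in hand, both $I$ (an ideal, hence an $A$-bimodule) and $B$ are $A$-bimodules that are projective as both left and right $A$-modules. Proposition~\ref{p:bimodule.proj} then applies directly to the bimodules $I$ and $B$ and shows that $I\otimes_A B\cong IB$ and $B\otimes_A I\cong BI$ are projective as both left and right $A$-modules, which is the first assertion.

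For the final claim I would argue by induction on $m$. The base case $m=1$ is exactly the hypothesis on $I$. For the inductive step, $I^m$ is an ideal, hence an $A$-bimodule, and by the inductive hypothesis it is projective as both a left and a right $A$-module; applying the first part of the corollary with $B=I^m$ shows that $I\cdot I^m=I^{m+1}$ is projective as both a left and a right $A$-module, completing the induction. I do not expect any genuine obstacle here: the entire argument is a direct application of Propositions~\ref{p:tensor.J}(4) and~\ref{p:bimodule.proj}, and the only delicate point is the bimodule-compatibility of the isomorphisms noted above.
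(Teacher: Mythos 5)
Your proposal is correct and follows exactly the route the paper takes: the paper's proof is the one-line observation that the claim follows from Proposition~\ref{p:tensor.J}(4) and Proposition~\ref{p:bimodule.proj}, and you have simply spelled out the identifications $I\otimes_A B\cong IB$, $B\otimes_A I\cong BI$ and the induction on $m$. The extra care you take with the bimodule structure of these isomorphisms is a reasonable elaboration, not a deviation.
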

\begin{proof}
This follows from Proposition~\ref{p:tensor.J}(4) and Proposition~\ref{p:bimodule.proj}.
\end{proof}

\section{The homotopy bimodule}
A \emph{presentation} of a $K$-algebra  is a gadget $\mathcal P=\langle X\mid R\rangle$ where $X$ is a set and $R\subseteq KX^*$.  The presentation is finite if $X$ and $R$ are finite.  A $K$-algebra $A$ is \emph{presented} by $\mathcal P$ if $A\cong KX^*/(R)$.  As usual, we put $J=(R)$.  We shall assume throughout this section that $A$ is projective over $K$, and hence $J$ is projective as both a left and right $KX^*$-module by Corollary~\ref{c:J.proj}.    The relation bimodule $J/J^2$ is generated by the cosets $r+J$ with $r\in R$, whence $A$ is bi-$\mathrm{FP}_2$ if the presentation is finite.  It follows there is an exact sequence $A\otimes_K KR\otimes_K A\to J/J^2\to 0$, and we aim to compute the kernel.  This was done in the case that $A$ is an integral monoid ring and $\mathcal P$ comes from a monoid presentation in the work of Pride~\cite{PrideLow} and of Guba-Sapir~\cite{GSDiagram}, where it was shown that the kernel is the first homology of the Squier complex of the presentation.
Here we construct the analogue of the first homology of the Squier complex for any presentation of a $K$-algebra $A$ that is projective over $K$.  We then prove the analogue of the Pride-Guba-Sapir exact sequence.  In the next section, we shall derive anew the Pride-Guba-Sapir sequence in the monoid case from our more general result.

Since $J=(R)$, we have an exact sequence of $KX^*$-bimodules
\begin{equation}\label{eq:exact.H}
0\to H\to KX^*\otimes_K KR\otimes_K KX^*\xrightarrow{\alpha} J\to 0.
\end{equation}
  To keep $R$ and its image in $KX^*$ formally distinct, we write $\ov r$ for the basis element of $KR$ corresponding to $r\in R\subseteq KX^*$.  Then $\alpha(u\otimes \ov r\otimes v)=urv$.   Note that since $J$ is projective as both a left and right $KX^*$-module by Corollary~\ref{c:J.proj}, the sequence \eqref{eq:exact.H} splits when viewed as either a sequence of left or of right $KX^*$-modules.  Moreover, since $KX^*\otimes_K KR\otimes_K KX^*$ is free as both a left and right $KX^*$-module, we deduce that $H$ is projective as both a left and right $KX^*$-module.

If $r_i,r_j\in R$ and $u\in X^*$, then we put $[r_i,u,r_j] = 1\otimes \ov r_i\otimes ur_j-r_iu\otimes \ov {r}_j\otimes 1\in KX^*\otimes_K KR\otimes_K KX^*$.  Note that $\alpha([r_i,u,r_j]) = r_iur_j-r_iur_j=0$, and so $[r_i,u,r_j]\in H$.  Let $D$ be the sub-$KX^*$-bimodule of $H$ generated by $\{[r_i,u,r_j]\mid r_i,r_j\in R, u\in X^*\}$.  Note that $D$ is spanned as a $K$-module by the elements $v[r_i,u,r_j]w$ with $v,w\in X^*$.

\begin{Prop}\label{p:is.A.mod}
There is a containment $JH+HJ\subseteq D$, and hence $H/D$ is an $A$-bimodule.
\end{Prop}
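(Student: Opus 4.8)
The plan is to reduce the two-sided containment to the two one-sided statements $rH\subseteq D$ and $Hr\subseteq D$ for each generator $r\in R$, and then to bootstrap these up to $JH+HJ\subseteq D$ using that $J=(R)$ and that both $H$ and $D$ are sub-$KX^*$-bimodules. This reduction is legitimate precisely because $D$ is a sub-bimodule, so it is closed under left and right multiplication by arbitrary elements of $KX^*$ and under $K$-linear combinations.

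First I would fix $r\in R$ and write an arbitrary element of $H$ in the free bimodule as $h=\sum_i c_i\,u_i\otimes \ov{r_i}\otimes v_i$ with $c_i\in K$, $u_i,v_i\in X^*$, $r_i\in R$; the membership $h\in H$ is exactly the condition $\alpha(h)=\sum_i c_i u_i r_i v_i=0$ in $KX^*$. The crux is a telescoping identity read directly off the generators of $D$: multiplying $[r,u_i,r_i]=1\otimes\ov r\otimes u_ir_i-ru_i\otimes\ov{r_i}\otimes 1$ on the right by $v_i$ rearranges to $ru_i\otimes\ov{r_i}\otimes v_i=1\otimes\ov r\otimes u_ir_iv_i-[r,u_i,r_i]v_i$. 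Summing against the $c_i$ gives
\[
rh=1\otimes\ov r\otimes\Bigl(\sum_i c_i u_i r_i v_i\Bigr)-\sum_i c_i\,[r,u_i,r_i]v_i .
\]
The first term vanishes \emph{because} $h\in H$, leaving $rh=-\sum_i c_i\,[r,u_i,r_i]v_i\in D$.

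The step I expect to carry the whole argument is this single cancellation: the telescoping always produces a ``defect'' term equal to $1\otimes\ov r\otimes\alpha(h)$, and the only thing one must check is that it is killed exactly by the hypothesis $\alpha(h)=0$; there is no genuine obstruction once this is matched up, so the remainder is bookkeeping. The containment $Hr\subseteq D$ is entirely symmetric: using $[r_i,v_i,r]=1\otimes\ov{r_i}\otimes v_ir-r_iv_i\otimes\ov r\otimes 1$ multiplied on the left by $u_i$ gives $u_i\otimes\ov{r_i}\otimes v_ir=u_i[r_i,v_i,r]+u_ir_iv_i\otimes\ov r\otimes 1$, and summing leaves $hr=\sum_i c_i u_i[r_i,v_i,r]\in D$ after the term $\bigl(\sum_i c_i u_i r_i v_i\bigr)\otimes\ov r\otimes 1$ again vanishes.

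Finally, since $J=(R)$ is spanned over $K$ by products $arb$ with $a,b\in KX^*$ and $r\in R$, I would conclude $JH\subseteq D$ by writing $(arb)h=a\,\bigl(r(bh)\bigr)$ with $bh\in H$, so that $r(bh)\in D$ and hence $a\bigl(r(bh)\bigr)\in D$; dually $h(arb)=\bigl((ha)r\bigr)b\in D$ gives $HJ\subseteq D$. Therefore $J$ annihilates $H/D$ on both sides, so the $KX^*$-bimodule structure on $H/D$ descends to an $A=KX^*/J$-bimodule structure, as claimed. Notably, the whole argument uses only the explicit generators of $D$ and the defining equation of $H$, with no appeal to completion or Gröbner-basis machinery.
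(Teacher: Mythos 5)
Your proposal is correct and follows essentially the same route as the paper: both rest on the identity $rh+[r,u,r_j]v=1\otimes\ov r\otimes\alpha(h)$ (equivalently, $rh\equiv 1\otimes\ov r\otimes\alpha(h)$ and $hr\equiv\alpha(h)\otimes\ov r\otimes 1$ modulo $D$), with the hypothesis $\alpha(h)=0$ killing the defect term. Your explicit bookkeeping for passing from $rH,Hr\subseteq D$ to $JH+HJ\subseteq D$ via $(arb)h=a(r(bh))$ is a detail the paper leaves implicit, but it is the intended argument.
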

\begin{proof}
We claim that if $r_i\in R$ and $h\in KX^*\otimes_K KR\otimes_K KX^*$, then $r_ih +D = 1\otimes \ov r_i\otimes \alpha(h)+D$ and $hr_i+D = \alpha(h)\otimes \ov r_i\otimes 1+D$.  Since $H=\ker \alpha$, it will then follow that $JH+HJ\subseteq D$.  We just prove the first equality, for which it suffices to handle the case $h=u\otimes \ov r_j\otimes v$ with $u,v\in X^*$ and $r_j\in R$.  Then $r_ih +[r_i,u,r_j]v = r_iu\otimes \ov r_j\otimes v+1\otimes \ov r_i\otimes ur_jv-r_iu\otimes \ov r_j\otimes v = 1\otimes \ov r_i\otimes \alpha(h)$, as was required.
\end{proof}

The $A$-bimodule $H/D$ will be denoted $\pi_2(\mathcal P)$ in analogy to the corresponding notation used by Pride in the case of integral monoid rings and monoid presentations.

The crucial idea is now the following.  The exact sequence \eqref{eq:exact.H} splits over $(KX^*)^{op}$, and so tensoring on the right with $A=KX^*/J$ yields an exact sequence $0\to H/HJ\to KX^*\otimes_K KR\otimes_K A\to J/J^2\to 0$.  Now tensoring on the left with $A$ and observing that $KX^*\otimes_K KR\otimes_K A$ is a projective $KX^*$-module, because $A$ is projective over $K$, yields an exact sequence $0\to \Tor_1^{KX^*}(KX^*/J,J/J^2)\to H/(JH+HJ)\to A\otimes_K KR\otimes_K A\to J/J^2\to 0$.  Note that $0\to J^2\to J\to J/J^2\to 0$ is a projective resolution of $J/J^2$ as a left $KX^*$-module by Corollary~\ref{c:prod.ideal}, and so $\Tor_1^{KX^*}(KX^*/J,J/J^2) = \ker(J^2/J^3\to J/J^2)=J^2/J^3$.  We will compute the connecting map $J^2/J^3\to H/(JH+HJ)$ and see that its image is $D/(JH+HJ)$.  This will then yield the exact sequence $0\to \pi_2(\mathcal P)\to A\otimes_K KR\otimes_K A\to J/J^2\to 0$, that we desire.

\begin{Thm}\label{t:Guba.Sapir.Pride}
Let $A$ be a $K$-algebra that is projective over $K$ given by a presentation $\mathcal P=\langle X\mid R\rangle$.  Let $J=(R)$.  Then there is an exact sequence of $A$-bimodules $0\to \pi_2(\mathcal P)\to A\otimes_K KR\otimes_K A\to J/J^2\to 0$.
\end{Thm}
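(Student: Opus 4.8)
The plan is to take as given the four-term exact sequence
$$0\to J^2/J^3\xrightarrow{\partial} H/(JH+HJ)\xrightarrow{\beta} A\otimes_K KR\otimes_K A\xrightarrow{\gamma} J/J^2\to 0$$
assembled in the discussion preceding the statement, where $\partial$ is the connecting homomorphism of the long exact $\Tor^{KX^*}(A,-)$-sequence attached to $0\to H/HJ\to KX^*\otimes_K KR\otimes_K A\to J/J^2\to 0$, and where the identifications of the four terms use Corollary~\ref{c:prod.ideal} and Proposition~\ref{p:tensor.J}. Granting this, the theorem reduces to the single claim that $\image\partial=D/(JH+HJ)$. Indeed, exactness at $H/(JH+HJ)$ then gives $\ker\beta=D/(JH+HJ)$, so $\beta$ factors through an injection $H/D=\pi_2(\mathcal P)\hookrightarrow A\otimes_K KR\otimes_K A$ with image $\ker\gamma$, while $\gamma$ is onto; this is precisely the asserted sequence.

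The crux, and the main obstacle, is therefore an explicit evaluation of $\partial$. I would compute $\Tor^{KX^*}(A,-)$ from the length-one projective resolution $0\to J\to KX^*\to A\to 0$ of $A$ as a right $KX^*$-module, which is available because $J$ is right-projective by Corollary~\ref{c:J.proj}. With this resolution, $\Tor_1^{KX^*}(A,J/J^2)$ is identified with $J\otimes_{KX^*}(J/J^2)$ through $j\otimes(x+J^2)\mapsto jx+J^3$, matching the description as $J^2/J^3$, and $\partial$ is computed by the standard snake-lemma recipe applied to the short exact sequence of complexes obtained by tensoring the sequence $0\to H/HJ\to KX^*\otimes_K KR\otimes_K A\to J/J^2\to 0$ with this resolution. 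Since $\partial$ is a homomorphism of $A$-bimodules, it suffices to evaluate it on the $A$-bimodule generators $r_iwr_j+J^3$ of $J^2/J^3$, where $r_i,r_j\in R$ and $w\in X^*$.

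Concretely, I would represent such a generator by $r_i\otimes(wr_j+J^2)$, lift $wr_j+J^2$ to $w\otimes\ov r_j\otimes 1$ in $KX^*\otimes_K KR\otimes_K A$, apply the differential to land at $r_iw\otimes\ov r_j\otimes 1$, and then recognize this element, which lies in the image of $H/HJ$ in $KX^*\otimes_K KR\otimes_K A$, as the image of $-[r_i,w,r_j]\in H$. The key cancellation is that in $[r_i,w,r_j]=1\otimes\ov r_i\otimes wr_j-r_iw\otimes\ov r_j\otimes 1$ the first summand dies once the last tensor factor is reduced modulo $J$, because $wr_j\in J$, leaving precisely $-r_iw\otimes\ov r_j\otimes 1$. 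This gives $\partial(r_iwr_j+J^3)=-[r_i,w,r_j]+(JH+HJ)$; since the elements $[r_i,w,r_j]+(JH+HJ)$ generate $D/(JH+HJ)$ as an $A$-bimodule, we conclude $\image\partial=D/(JH+HJ)$. The delicate points I expect to spend the most care on are the bookkeeping in the snake-lemma step — tracking which tensor slot has been reduced modulo $J$, and checking that the constructed element genuinely lies in $H/HJ=\ker\beta$ — together with recording that $\partial$ is $A$-bilinear, its left-linearity coming from the first $\Tor$-variable and its right-linearity from the right $KX^*$-actions on $J/J^2$ and $H/HJ$, both of which factor through $A$.
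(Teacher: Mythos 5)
Your proposal is correct and follows essentially the same route as the paper: both reduce the theorem to showing that the connecting homomorphism $\partial\colon J^2/J^3\to H/(JH+HJ)$ of the long exact $\Tor$ sequence has image $D/(JH+HJ)$, and both compute $\partial$ on generators by an explicit snake-lemma chase, landing on $\pm[r_i,w,r_j]$ modulo $JH+HJ$. The only immaterial differences are that you resolve $A$ in the first variable via $0\to J\to KX^*\to A\to 0$ while the paper resolves the second variable using a $3\times 3$ diagram whose columns have the form $0\to MJ\to M\to M/MJ\to 0$, and that you evaluate $\partial$ on bimodule generators $r_iwr_j$ (with an overall sign difference) where the paper uses left-module generators $r_iur_jv$.
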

\begin{proof}
Note that  $F_R=KX^*\otimes_K KR\otimes_K KX^*$ is free as a left  $KX^*$-module.
We have already observed that $H$ is projective as both a left and right $KX^*$-module.  Therefore, $HJ$ is also projective as both a left and right $KX^*$-module and $HJ\cong H\otimes_{KX^*} J$ by Corollary~\ref{c:prod.ideal} and Proposition~\ref{p:tensor.J}(4).  Thus $0\to HJ\to H\to H/HJ\to 0$ is a projective resolution as both left and right $KX^*$-modules.  Also, as already mentioned, $0\to J^2\to J\to J/J^2\to 0$ is a projective resolution.  Clearly,  $0\to F_RJ\to F_R\to F_R/F_RJ\to 0$ is exact.  Moreover,    $F_R/F_RJ\cong KX^*\otimes_K KR \otimes_K A$ is projective as a left $KX^*$-module, since $A$ is a projective $K$-module, and so this sequence splits over $KX^*$.  Hence $F_RJ$ is also projective as a left $KX^*$-module (which also follows from Corollary~\ref{c:prod.ideal}).    Note that since $J$ is projective as a left $KX^*$-module (hence flat), tensoring \eqref{eq:exact.H} on the right with $J$ and applying Proposition~\ref{p:tensor.J}(4), we have that $0\to HJ\to F_RJ\to J^2\to 0$ is exact.  Finally, as we already observed, since \eqref{eq:exact.H} splits over $(KX^*)^{op}$, tensoring on the right with $A$ yields the exact sequence $0\to H/HJ\to F_R/F_RJ\to J/J^2\to 0$.  Thus we have the commutative diagram of left $KX^*$-modules displayed in Figure~\ref{f:fig1} with all rows exact, each column a projective resolution and with the middle column split.
\begin{figure}[tbhp]
\[\begin{tikzcd}
       & 0\ar{d}        & 0\ar{d}          & 0\ar{d}          &\\
0\ar{r}& HJ\ar{r}\ar{d} &F_RJ\ar{r}\ar{d}    & J^2\ar{r}\ar{d}  & 0\\
0\ar{r}& H\ar{r}\ar{d}  &  F_R\ar{r}\ar{d}   & J\ar{r}\ar{d}    & 0 \\
0\ar{r}&H/HJ\ar{r}\ar{d}& F_R/F_RJ\ar{r}\ar{d} & J/J^2\ar{r}\ar{d}& 0\\
       & 0              & 0                &  0.               &
\end{tikzcd}\]
\caption{\label{f:fig1}}
\end{figure}
Since each module in the first two rows is projective, these rows split.  Hence tensoring on the left with $A=KX^*/J$ yields the commutative diagram with exact rows and columns in Figure~\ref{f:fig2}.
\begin{figure}
\[\begin{tikzcd}
       &         & 0\ar{d}          &           &\\
0\ar{r}& HJ/JHJ\ar{r}\ar{d}{a} &F_RJ/JF_RJ\ar{r}\ar{d}{b}    & J^2/J^3\ar{r}\ar{d}{c}  & 0\\
0\ar{r}& H/JH\ar{r}\ar{d}  &  F_R/JF_R\ar{r}\ar{d}   & J/J^2\ar{r}\ar{d}    & 0 \\
       &H/(JH+HJ)\ar{r}\ar{d}& F_R/(JF_R+F_RJ)\ar{r}\ar{d} & J/J^2\ar{r}\ar{d}& 0\\
       & 0              & 0                &  0.               &
\end{tikzcd}\]
\caption{\label{f:fig2}}
\end{figure}
Applying the snake lemma gives us an exact sequence $0\to \ker a\to 0\to \ker c\xrightarrow{\partial} \coker a\to \coker b\to \coker c\to 0$ where $\partial$ is the connecting map and the others are induced naturally.  Notice that $c$ is the zero map, and so $\ker c=J^2/J^3$, whereas $\coker a=H/(JH+HJ)$, $\coker b=F_R/(JF_R+F_RJ)$ and $\coker c=J/J^2$.  Note that $J^2/J^3$ is generated as a left $A$-module by elements of the form $r_iur_jv+J^3$ with $u,v\in X^*$ and $r_i,r_j\in R$.  We can take as a preimage in $F_RJ/JF_RJ$ the element $1\otimes \ov r_i\otimes ur_jv+JF_RJ$ (since $ur_jv\in J$), which maps to $1\otimes \ov r_i\otimes ur_jv+JF_R = 1\otimes \ov r_i\otimes ur_jv- r_iu\otimes \ov r_j\otimes v+JF_R = [r_i,u,r_j]v+JF_R$ (since $r_iu\in J)$.  Thus $\partial (r_iur_jv+J^3) = [r_i,u,r_j]v+JH+HJ$.  It follows that the image of $\partial$ is $(D+JH+HJ)/(JH+HJ)=D/(JH+HJ)$ (by Proposition~\ref{p:is.A.mod}), and hence $\coker \partial \cong H/D=\pi_2(\mathcal P)$. Since $F_R/(JF_R+F_RJ)\cong A\otimes_K KR\otimes_K A$,  this completes the proof.
\end{proof}

Splicing together the exact sequences in Theorems~\ref{t:Bergman.dicks} and~\ref{t:Guba.Sapir.Pride} yields the following result.

\begin{Cor}\label{c:partial.res}
Let $A$ be a $K$-algebra that is projective over $K$ with presentation $\mathcal P=\langle X\mid R\rangle$.  Then there is an exact sequence of $A$-bimodules
\[0\to \pi_2(\mathcal P)\to A\otimes_K KR\otimes_K A\to A\otimes_K KX\otimes_K A\to A\otimes_K A\to A\to 0.\]  In particular, if $\mathcal P$ is a finite presentation, then $A$ is of type bi-$\mathrm{FP}_3$ over $K$ if and only if $\pi_2(\mathcal P)$ is finitely generated.
\end{Cor}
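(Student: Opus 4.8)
The plan is to glue the two sequences along their shared term $J/J^2$. Let $q\colon A\otimes_K KR\otimes_K A\to J/J^2$ be the surjection produced by Theorem~\ref{t:Guba.Sapir.Pride}, so that $\pi_2(\mathcal P)=\ker q$, and let $d_2\colon J/J^2\to A\otimes_K KX\otimes_K A$ be the injection of Theorem~\ref{t:Bergman.dicks}. I would form the composite $e=d_2\circ q$ and check exactness of
\[0\to \pi_2(\mathcal P)\to A\otimes_K KR\otimes_K A\xrightarrow{e} A\otimes_K KX\otimes_K A\xrightarrow{d_1} A\otimes_K A\xrightarrow{\mu} A\to 0\]
node by node. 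Exactness at $A\otimes_K A$ and at $A$, as well as injectivity of the leftmost map, are inherited verbatim from the two cited theorems, so only the two spliced nodes require attention. At $A\otimes_K KR\otimes_K A$, injectivity of $d_2$ gives $\ker e=\ker q=\pi_2(\mathcal P)$, matching the incoming inclusion; at $A\otimes_K KX\otimes_K A$, surjectivity of $q$ gives $\image e=\image d_2=\ker d_1$, the last equality being exactness of the Bergman--Dicks sequence. This is the routine splicing lemma, and all the substantive work is already discharged in Theorems~\ref{t:Bergman.dicks} and~\ref{t:Guba.Sapir.Pride}.

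For the homological finiteness statement I would first note that $A\otimes_K A$ is the free $A$-bimodule on a single generator, whence $A\otimes_K KX\otimes_K A\cong\bigoplus_{x\in X}A\otimes_K A$ and $A\otimes_K KR\otimes_K A\cong\bigoplus_{r\in R}A\otimes_K A$ are free $A$-bimodules, finitely generated precisely when $X$ and $R$ are finite (this needs no hypothesis on $A$). Hence, when $\mathcal P$ is finite, the displayed sequence is the beginning of a free bimodule resolution of $A$ that is finitely generated in homological degrees $0,1,2$, and the kernel $\ker\big(A\otimes_K KR\otimes_K A\to A\otimes_K KX\otimes_K A\big)$ sitting above degree $2$ is exactly $\pi_2(\mathcal P)$. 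By the standard characterization of type $\mathrm{FP}_n$ in terms of syzygies (a consequence of Schanuel's lemma for modules over $A\otimes_K A^{op}$), $A$ is bi-$\mathrm{FP}_3$ over $K$ if and only if this kernel is finitely generated: if $\pi_2(\mathcal P)$ is finitely generated one extends the partial resolution by a finitely generated free module mapping onto it and continues arbitrarily in higher degrees, while conversely comparing our partial resolution with any bimodule resolution that is finitely generated through degree $3$ forces $\pi_2(\mathcal P)$ to be finitely generated.

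I expect no real obstacle here: granted the two theorems, the exact sequence is purely formal, and the delicate computation of the connecting map has already been absorbed into Theorem~\ref{t:Guba.Sapir.Pride}. The one place that rewards a little care is the bookkeeping for bi-$\mathrm{FP}_3$ — confirming that the three middle terms are finitely generated \emph{free} (in particular projective) $A$-bimodules and invoking the syzygy criterion with the correct indexing, so that finite generation of the degree-$2$ kernel, namely $\pi_2(\mathcal P)$, is what controls finiteness in homological degree $3$.
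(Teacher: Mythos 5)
Your argument is correct and matches the paper's, which simply splices the exact sequences of Theorems~\ref{t:Bergman.dicks} and~\ref{t:Guba.Sapir.Pride} and reads off the bi-$\mathrm{FP}_3$ criterion from the resulting finitely generated free partial resolution. You have merely written out the routine details (the composite map $e=d_2\circ q$, the node-by-node exactness check, and the Schanuel-type syzygy argument) that the paper leaves implicit.
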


\section{Monoid presentations}
We now consider the case of a monoid presentation and apply Theorem~\ref{t:Guba.Sapir.Pride} to recover a result of Pride~\cite[Theorem~4.2]{PrideLow} and Guba-Sapir~\cite[Theorem~11.3]{GSDiagram} (see also~\cite{KobOttoExacbi}).  The proof of Theorem~\ref{t:Guba.Sapir.Pride} is more elementary than the proofs found in~\cite{GSDiagram} or~\cite{KobOttoExacbi} as we avoid Knuth-Bendix completions.

Let $\mathcal P = \langle X\mid R\rangle$ be a monoid presentation of a monoid $M$.  We view $R\subseteq X^*\times X^*$.  The \emph{derivation graph} $\Gamma(\mathcal P)$ has vertex set $X^*$ and directed edge set $X^*\times R\times X^*$ where $(u,(\ell,r),v)\colon u\ell v\to urv$.  It is evident that two vertices are in the same connected component of $\Gamma(\mathcal P)$ if and only if they represent the same element of $M$.  The \emph{Squier complex} $D(\mathcal P)$ has $1$-skeleton $\Gamma(\mathcal P)$ and has set of $2$-cells $X^*\times R\times X^*\times R\times X^*$.  The cell corresponding to $(u,(\ell,r),v,(\ell',r'),w)$ has boundary path \[(u,(\ell,r),v\ell' w)(urv,(\ell',r'),w)[(u\ell v,(\ell',r'),w)(u,(\ell,r),vr'w)]\inv.\]
 Roughly speaking, paths correspond to monoid pictures of a derivation from one word to another, and two paths are homotopic if and only if they can be obtained from each other by isotopy and removing or inserting dipoles. See~\cite{GSDiagram} for details, as we will not need monoid pictures.

Notice that $X^*$ has commuting left and right actions on $D(\mathcal P)$ by regular cellular maps induced by multiplication.

The monoid algebra $\mathbb ZM$ has presentation as a $\mathbb Z$-algebra $\mathcal P'=\langle X\mid \til R\rangle$ where $\til R= \{r-\ell\mid (\ell,r)\in R\}$.  Moreover, $\mathbb ZM$ is free over $\mathbb Z$ so the above theory applies. Put $J=(\til R)$ as usual. Then $J/J^2$ is the relation bimodule considered by Ivanov~\cite{IvanovRelation}. Notice that if we look at the cellular chain complex of $\Gamma(\mathcal P)$, we get the exact sequence $0\to H_1(\Gamma(\mathcal P))\to \mathbb Z[X^*\times R\times X^*]\xrightarrow{d_1} \mathbb ZX^*$, and the image of $d_1$ is $J$ as $d_1((1,(\ell,r),1)) = r-\ell$.  Identifying $\mathbb Z[X^*\times R\times X^*]$ with $\mathbb ZX^*\otimes_{\mathbb Z} \mathbb Z\til R\otimes_{\mathbb Z} \mathbb ZX^*$ via $(u,(\ell,r),v)\mapsto u\otimes \ov{r-\ell}\otimes v$ then identifies $H_1(\Gamma(\mathcal P))$ with $H$ from \eqref{eq:exact.H}.

On the other hand, if $d_2\colon \mathbb Z[X^*\times R\times X^*\times R\times X^*]\to \mathbb Z[X^*\times R\times X^*]$ is the boundary map in the cellular chain complex of $D(\mathcal P)$, then $d_2(u,(\ell,r),v,(\ell',r'),w) = (u,(\ell,r),v\ell'w)-(u,(\ell,r),vr'w)+(urv,(\ell',r'),w)-(u\ell v,(\ell',r'),w)$.  Under our identification of $\mathbb Z[X^*\times R\times X^*]$ with $\mathbb ZX^*\otimes_{\mathbb Z} \mathbb ZR\otimes_{\mathbb Z} \mathbb ZX^*$ this maps to $-u\otimes \ov{r-\ell}\otimes v(r'-\ell') w+u(r-\ell)v\otimes \ov{r'-\ell'}\otimes w= -u[(r-\ell,v,r'-\ell')]w$.  It follows that $H_1(D(\mathcal P))\cong H/D=\pi_2(\mathcal P')$ as $\mathbb ZX^*$-bimodules, and hence $H_1(D(\mathcal P))$ is a $\mathbb ZM$-bimodule (as was already observed by Pride).  Pride puts $\pi_2^{(b)}(\mathcal P)=H_1(D(\mathcal P))$.  Putting together these observations with Theorem~\ref{t:Guba.Sapir.Pride} recovers the exact sequence of Pride~\cite{PrideLow} and Guba-Sapir~\cite{GSDiagram}.

\begin{Thm}[Pride/Guba-Sapir]\label{t:PGS.OG}
Let $\mathcal P=\langle X\mid R\rangle$ be a monoid presentation.  Then there is an exact sequence of $\mathbb ZM$-bimodules \[0\to \pi_2^{(b)}(\mathcal P)\to \mathbb ZM\otimes_{\mathbb Z} \mathbb ZR\otimes_{\mathbb Z} \mathbb ZM\to J/J^2\to 0\] where $J/J^2$ is the relation bimodule of this presentation.
\end{Thm}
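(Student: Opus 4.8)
The plan is to deduce this statement directly from Theorem~\ref{t:Guba.Sapir.Pride}, using the dictionary between the Squier complex and the bimodule $H/D$ set up in the discussion immediately preceding the statement. First I would note that $\mathbb ZM$ is free, and in particular projective, over $\mathbb Z$, and that $\mathcal P'=\langle X\mid \til R\rangle$ with $\til R=\{r-\ell\mid (\ell,r)\in R\}$ is an algebra presentation of $\mathbb ZM$. Hence Theorem~\ref{t:Guba.Sapir.Pride} applies with $K=\mathbb Z$ and $A=\mathbb ZM$, yielding an exact sequence of $\mathbb ZM$-bimodules
\[0\to \pi_2(\mathcal P')\to \mathbb ZM\otimes_{\mathbb Z}\mathbb Z\til R\otimes_{\mathbb Z}\mathbb ZM\to J/J^2\to 0,\]
where $J=(\til R)$. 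It then remains only to rewrite the three terms in the language of the monoid presentation $\mathcal P$.

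The middle and right terms are immediate. The bijection $R\to \til R$, $(\ell,r)\mapsto r-\ell$, is a bijection of free generating sets, so it induces an isomorphism $\mathbb ZR\cong \mathbb Z\til R$ of $\mathbb Z$-modules and thus an isomorphism $\mathbb ZM\otimes_{\mathbb Z}\mathbb ZR\otimes_{\mathbb Z}\mathbb ZM\cong \mathbb ZM\otimes_{\mathbb Z}\mathbb Z\til R\otimes_{\mathbb Z}\mathbb ZM$ of bimodules; and $J/J^2$ is by definition the relation bimodule of $\mathcal P'$, which is precisely the relation bimodule attached to $\mathcal P$. The substantive point is the identification of the left term. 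Here I would invoke the chain-level computation already carried out: the identification $\mathbb Z[X^*\times R\times X^*]\cong \mathbb ZX^*\otimes_{\mathbb Z}\mathbb Z\til R\otimes_{\mathbb Z}\mathbb ZX^*$ given by $(u,(\ell,r),v)\mapsto u\otimes \ov{r-\ell}\otimes v$ carries $H_1(\Gamma(\mathcal P))$ onto $H$ from \eqref{eq:exact.H}, and carries the image of the cellular boundary $d_2$ of $D(\mathcal P)$ onto the sub-bimodule $D$ generated by the elements $[r-\ell,v,r'-\ell']$. Therefore $\pi_2^{(b)}(\mathcal P)=H_1(D(\mathcal P))\cong H/D=\pi_2(\mathcal P')$, and substituting this into the displayed sequence finishes the argument.

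The one genuinely delicate step — and the one whose verification is doing all the work, even though it is already dispatched in the preamble to the statement — is the matching of the boundary map $d_2$ with the generators of $D$. Because the boundary path of a $2$-cell traverses one inverted edge, $d_2$ carries signs, and one must confirm that under the tensor identification $d_2(u,(\ell,r),v,(\ell',r'),w)$ equals $-\,u[r-\ell,v,r'-\ell']w$; since $D$ is a sub-bimodule, hence closed under multiplication by $-1$, the overall sign is harmless and the image of $d_2$ is exactly $D$. Once this is pinned down, the isomorphism $H_1(D(\mathcal P))\cong H/D$ is one of $\mathbb ZX^*$-bimodules, being induced by an $X^*$-equivariant identification of chain complexes. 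As $JH+HJ\subseteq D$ by Proposition~\ref{p:is.A.mod}, the $\mathbb ZX^*$-action on $H/D$ factors through $\mathbb ZM$, and likewise the $X^*$-actions on $D(\mathcal P)$ descend to $M$ on $H_1(D(\mathcal P))$; an $\mathbb ZX^*$-linear map between $\mathbb ZM$-bimodules is automatically $\mathbb ZM$-linear, so the isomorphism, and with it the whole exact sequence, is one of $\mathbb ZM$-bimodules, as asserted.
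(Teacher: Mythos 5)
Your proposal is correct and follows essentially the same route as the paper: the paper's proof of this theorem is precisely the discussion preceding it, namely passing to the algebra presentation $\mathcal P'=\langle X\mid \til R\rangle$ of $\mathbb ZM$, identifying $H_1(\Gamma(\mathcal P))$ with $H$ and the image of the cellular boundary $d_2$ with $D$ (including the same sign computation $d_2(u,(\ell,r),v,(\ell',r'),w)\mapsto -u[r-\ell,v,r'-\ell']w$), and then invoking Theorem~\ref{t:Guba.Sapir.Pride}. No substantive differences.
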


This theorem implies that the abelianization of a diagram group is free abelian~\cite[Theorem~11.2]{GSDiagram}.

There is an augmentation homomorphism $\varepsilon\colon \mathbb ZM\to \mathbb Z$ with $\varepsilon(m)=1$ for all $m\in M$.  The left (respectively, right) relation module, studied by Ivanov~\cite{IvanovRelation}, is $J/J^2\otimes_{\mathbb ZM} \mathbb Z$ (respectively, $\mathbb Z\otimes_{\mathbb ZM}J/J^2$).  Pride defined $\pi_2^{(\ell)}(\mathcal P) = \pi_2^{(b)}(\mathcal P)\otimes_{\mathbb ZM}\mathbb Z$ and $\pi_2^{(r)}(\mathcal P)=\mathbb Z\otimes_{\mathbb ZM} \pi_2^{(b)}(\mathcal P)$.  The following exact sequences are known~\cite{PrideLow,KobOttoExacbi}, but our proof is different.

\begin{Cor}\label{c:exact}
Let $\mathcal P=\langle X\mid R\rangle$ be a presentation of a monoid $M$.  Let $J=\ker(\mathbb ZX^*\to \mathbb ZM)$. Then there are exact sequences
\begin{gather*}
0\to J/J^2\otimes_{\mathbb ZM}\mathbb Z\to \mathbb ZM\otimes_{\mathbb Z}\mathbb ZX\to \mathbb ZM\to \mathbb Z\to 0\\
0\to \pi_2^{(\ell)}(\mathcal P)\to \mathbb ZM\otimes_{\mathbb Z}\mathbb ZR\to J/J^2\otimes_{\mathbb ZM}\mathbb Z\to 0
\end{gather*}
and dually for the right relation module and $\pi_2^{(r)}(\mathcal P)$.
\end{Cor}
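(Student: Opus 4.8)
The plan is to obtain both sequences (and their duals) by applying the functor $-\otimes_{\mathbb ZM}\mathbb Z$ to the two exact sequences already in hand, where $\mathbb Z$ carries the trivial $\mathbb ZM$-module structure coming from the augmentation $\varepsilon$. Since $\mathbb ZM$ is free over $\mathbb Z$, Theorem~\ref{t:Bergman.dicks} (with $K=\mathbb Z$ and $A=\mathbb ZM$) supplies the four-term sequence
\[0\to J/J^2\to \mathbb ZM\otimes_{\mathbb Z}\mathbb ZX\otimes_{\mathbb Z}\mathbb ZM\to \mathbb ZM\otimes_{\mathbb Z}\mathbb ZM\to \mathbb ZM\to 0,\]
and Theorem~\ref{t:PGS.OG} supplies $0\to \pi_2^{(b)}(\mathcal P)\to \mathbb ZM\otimes_{\mathbb Z}\mathbb ZR\otimes_{\mathbb Z}\mathbb ZM\to J/J^2\to 0$. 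Tensoring the former on the right by $\mathbb Z$ over $\mathbb ZM$ collapses the right-hand tensor factor of each term (using $\mathbb ZM\otimes_{\mathbb ZM}\mathbb Z\cong \mathbb Z$, i.e.\ Proposition~\ref{p:tensor.J}(2)), turning the three right-hand terms into $\mathbb ZM\otimes_{\mathbb Z}\mathbb ZX$, $\mathbb ZM$, and $\mathbb Z$, and the leftmost into $J/J^2\otimes_{\mathbb ZM}\mathbb Z$; the latter sequence becomes $0\to \pi_2^{(\ell)}(\mathcal P)\to \mathbb ZM\otimes_{\mathbb Z}\mathbb ZR\to J/J^2\otimes_{\mathbb ZM}\mathbb Z\to 0$ by the same bookkeeping together with the definitions $\pi_2^{(\ell)}(\mathcal P)=\pi_2^{(b)}(\mathcal P)\otimes_{\mathbb ZM}\mathbb Z$ and of the left relation module. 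Thus the only real content is that tensoring preserves exactness.

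For exactness I would show that both sequences split as sequences of right $\mathbb ZM$-modules, after which the additive functor $-\otimes_{\mathbb ZM}\mathbb Z$ automatically keeps them exact. In the Bergman--Dicks sequence the three terms other than $J/J^2$ are free right $\mathbb ZM$-modules (the right action living on the last tensor factor). Breaking the four-term sequence at the middle into $0\to Z\to \mathbb ZM\otimes_{\mathbb Z}\mathbb ZM\to \mathbb ZM\to 0$ and $0\to J/J^2\to \mathbb ZM\otimes_{\mathbb Z}\mathbb ZX\otimes_{\mathbb Z}\mathbb ZM\to Z\to 0$, the first splits over $(\mathbb ZM)^{op}$ because $\mathbb ZM$ is free on the right (e.g.\ via the right $\mathbb ZM$-linear section $a\mapsto 1\otimes a$), so $Z$ is projective as a right $\mathbb ZM$-module; the second then splits as well, exhibiting $J/J^2$ as a direct summand of a free right $\mathbb ZM$-module and hence projective on the right. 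Feeding this back into the Pride--Guba--Sapir sequence, whose middle term is free on the right and whose cokernel $J/J^2$ is now known to be projective on the right, shows that it too splits over $(\mathbb ZM)^{op}$. Consequently $-\otimes_{\mathbb ZM}\mathbb Z$ applied to either sequence yields an exact sequence, proving the first two displays. Equivalently, one may phrase the key point as $\Tor_1^{\mathbb ZM}(J/J^2,\mathbb Z)=0$, which is exactly what upgrades right-exactness of $-\otimes_{\mathbb ZM}\mathbb Z$ to left exactness here.

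The dual sequences follow by the mirror argument: the same terms are free as \emph{left} $\mathbb ZM$-modules, so the analogous peeling shows $J/J^2$ is projective as a left $\mathbb ZM$-module and that both sequences split over $\mathbb ZM$; tensoring on the left by $\mathbb Z$ then produces the exact sequences involving $\mathbb Z\otimes_{\mathbb ZM}J/J^2$ and $\pi_2^{(r)}(\mathcal P)=\mathbb Z\otimes_{\mathbb ZM}\pi_2^{(b)}(\mathcal P)$. I expect the one point needing care to be the one-sided projectivity of $J/J^2$: everything else is the routine identification of tensor factors. This projectivity is not circular, since it is read off from the already-established Bergman--Dicks sequence rather than assumed, and it is precisely the input that makes the tensor functor exact on the relevant sequences.
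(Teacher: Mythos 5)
Your proposal is correct and follows essentially the same route as the paper: break the Bergman--Dicks four-term sequence at the middle, use freeness of the relevant terms as right $\mathbb ZM$-modules to deduce that both pieces split over $(\mathbb ZM)^{op}$ and hence that $J/J^2$ is projective on the right, feed that into the Pride--Guba--Sapir sequence to split it as well, and then apply $-\otimes_{\mathbb ZM}\mathbb Z$ (dually on the other side). The paper's proof names the intermediate kernel $\Omega$ but is otherwise the same argument.
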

\begin{proof}
By Theorem~\ref{t:Bergman.dicks}, we have exact sequences of $\mathbb ZM$-bimodules $0\to \Omega\to \mathbb ZM\otimes_{\mathbb  Z}\mathbb ZM\to \mathbb ZM\to 0$ and $0\to J/J^2\to \mathbb ZM\otimes_{\mathbb Z}\mathbb ZX\otimes_{\mathbb Z}\mathbb ZM\to \Omega\to 0$.  Since $\mathbb ZM$ and $\mathbb ZM\otimes_{\mathbb  Z}\mathbb ZM$ are free right $\mathbb ZM$-modules, the first sequence splits over $\mathbb ZM^{op}$ and $\Omega$ is a projective right $\mathbb ZM$-module.  Therefore the second sequence splits over $\mathbb ZM^{op}$ and $J/J^2$ is a projective right $\mathbb ZM$-module (since $\mathbb ZM\otimes_{\mathbb Z}\mathbb ZX\otimes_{\mathbb ZM}\mathbb ZM$ is free as a right $\mathbb ZM$-module).  Thus tensoring on the right with $\mathbb Z$ yields exact sequences $0\to \Omega\otimes_{\mathbb ZM}\mathbb Z\to \mathbb ZM\to \mathbb Z\to 0$ and $0\to J/J^2\otimes_{\mathbb ZM}\mathbb Z\to \mathbb ZM\otimes_{\mathbb Z}\mathbb ZX\to \Omega\otimes_{\mathbb ZM}\mathbb Z\to 0$, and these splice together to give the first exact sequence.

The exact sequence $0\to \pi_2^{(b)}(\mathcal P)\to \mathbb ZM\otimes_{\mathbb Z}\mathbb ZR\otimes_{\mathbb Z}\mathbb ZM\to J/J^2\to 0$ splits over $\mathbb ZM^{op}$ because we observed that $J/J^2$ is a projective right $\mathbb ZM$-module.  Therefore, tensoring on the right with $\mathbb Z$ yields the second exact sequence.
\end{proof}

If we splice the two exact sequences in Corollary~\ref{c:exact} together, we get the exact sequence $0\to \pi_2^{(\ell)}(\mathcal P)\to \mathbb Z[M\times R]\to \mathbb Z[M\times X]\to \mathbb ZM\to \mathbb Z\to 0$.  Recall that the (left) Cayley complex $C^{(\ell)}(\mathcal P)$ of $M$ with respect to the presentation $\mathcal P$ is the free $M$-CW complex with vertex set $M$, edge set $M\times X$ and set of $2$-cells $M\times R$~\cite{TopFinite1}.  Here $(m,x)\colon m\to mx$ and the boundary path  of $(m,(\ell,r))$ is the path from $m$ to $m\ell=mr$ labelled by $r$ followed by the inverse of the path from $m$ to $m\ell=mr$ labelled by $\ell$.  This complex is simply connected and the above exact sequence identifies $\pi_2^{(\ell)}(\mathcal P)$ as a left $\mathbb ZM$-module with $H_2(C^{(\ell)}(\mathcal P))\cong \pi_2(C^{(\ell)}(\mathcal P))$.

We observe that Theorem~\ref{t:PGS.OG} provides a similar result for the two-sided Cayley complex $C^{(b)}(\mathcal P)$~\cite{TopFinite1}.  The vertex set of $C^{(b)}(\mathcal P)$ is $M\times M$, the edge set is $M\times X\times M$ and the set of $2$-cells is $M\times R\times M$.  The edge $(m,x,m')$ goes from $(m,xm')\to (mx,m')$.  If $(\ell,r)\in R$ and $(m,m')\in M\times M$, then $m\ell=mr$, $\ell m'=rm'$, and it is not difficult to see that there are paths labelled by $\ell$ and by $r$ from $(m,\ell m')\to (m\ell,m')$.  The boundary path of the cell $(m,(\ell,r),m')$ is the aforementioned path labelled by $r$ followed by the inverse of the aforementioned path labelled by $\ell$.  There are natural commuting left and right cellular actions of $M$ on $C^{(b)}$ induced by multiplication.  The set of path components of $C^{(b)}(\mathcal P)$ is in bijection with $M$ via $(m,m')\mapsto mm'$ on vertices and each path component is simply connected.  Moreover, one has $H_0(C^{(b)}(\mathcal P))\cong \mathbb ZM$ as a $\mathbb ZM$-bimodule.

The exact sequence in Theorem~\ref{t:PGS.OG} can be spliced with that in Theorem~\ref{t:Bergman.dicks} to yield an exact sequence $0\to \pi_2^{(b)}(\mathcal P)\to \mathbb ZM\otimes_{\mathbb Z}\mathbb ZR\otimes_{\mathbb Z}\mathbb ZM\to \mathbb ZM\otimes_{\mathbb Z}\mathbb ZX\otimes_{\mathbb Z}ZM\to \mathbb ZM\otimes_{\mathbb Z}\mathbb ZM\to \mathbb ZM\to 0$.  It is not difficult to check using Theorem~\ref{t:Bergman.dicks} that this can be identified with the exact sequence
$0\to \pi_2^{(b)}(\mathcal P)\to \mathbb Z[M\times R\times M]\to \mathbb Z[M\times X\times M]\to \mathbb Z[M\times M]\to H_0(C^{(b)}(\mathcal P))\to 0$, where the maps come from the cellular chain complex of $C^{(b)}(\mathcal P)$.  Therefore,  we can identify $\pi_2^{(b)}(\mathcal P)$ with $H_2(C^{(b)}(\mathcal P))$ as a $\mathbb ZM$-bimodule.  Since each path component of $C^{(b)}(\mathcal P)$ is simply connected, as an abelian group this is the direct sum of the $\pi_2$ of the path components of $C^{(b)}(\mathcal P)$, justifying the notation. It follows from the above exact sequence that if the presentation is finite, then $M$ is bi-$\mathrm{FP}_3$ if and only if $\pi_2^{(b)}(\mathcal P)$ is finitely generated, as was observed by Kobayashi and Otto~\cite{KobayashiOtto}.

\def\malce{\mathbin{\hbox{$\bigcirc$\rlap{\kern-7.75pt\raise0,50pt\hbox{${\tt
  m}$}}}}}\def\cprime{$'$} \def\cprime{$'$} \def\cprime{$'$} \def\cprime{$'$}
  \def\cprime{$'$} \def\cprime{$'$} \def\cprime{$'$} \def\cprime{$'$}
  \def\cprime{$'$} \def\cprime{$'$}

\end{document}